\newtheorem{theorem}{Theorem}[section]
\newtheorem{lem}{Lemma}[section]
\newtheorem{cor}{Corollary}[section]
\newtheorem{prop}{Proposition}[section]
\newtheorem{claim}{Claim}[section]
\newcommand{\ext}{\operatorname{ext}}
\newcommand{\sides}{\operatorname{sides}}
\begin{document}
\begin{frontmatter}

\title{Examples of nonpolygonal limit shapes in i.i.d. first-passage
percolation and infinite coexistence in spatial growth models}
\runtitle{Nonpolygonal limit shapes}

\begin{aug}
\author[A]{\fnms{Michael} \snm{Damron}\corref{}\thanksref{t1}\ead[label=e1]{mdamron@math.princeton.edu}}
\and
\author[A]{\fnms{Michael} \snm{Hochman}\thanksref{t2}\ead[label=e2]{hochman@math.princeton.edu}}
\runauthor{M. Damron and M. Hochman}
\affiliation{Princeton University}
\address[A]{Mathematics Department\\
Princeton University\\
Fine Hall, Washington Rd.\\
Princeton, New Jersey 08544\\
USA\\
\printead{e1}\\
\phantom{E-mail: }\printead*{e2}} 
\end{aug}

\thankstext{t1}{Supported by an NSF postdoctoral fellowship.}
\thankstext{t2}{Supported by NSF Grant 0901534.}

\received{\smonth{9} \syear{2010}}
\revised{\smonth{2} \syear{2011}}

%
\begin{abstract}
We construct an edge-weight distribution for i.i.d. first-passage
percolation on $\mathbb{Z}^{2}$ whose limit shape is not a polygon and
whose extreme points are arbitrarily dense in the boundary.
Consequently, the associated Richardson-type growth model can support
coexistence of a countably infinite number of distinct species, and the
graph of infection has infinitely many ends.
\end{abstract}

%
\begin{keyword}[class=AMS]
\kwd[Primary ]{60K35}
\kwd[; secondary ]{82B43}.
\end{keyword}
\begin{keyword}
\kwd{First-passage percolation}
\kwd{limit shapes}
\kwd{extreme points}
\kwd{Richardson's growth model}
\kwd{graph of infection}.
\end{keyword}

\end{frontmatter}

\section{Introduction}

Throughout this note $\mu$ denotes a Borel probability measure on
$[0,\infty)$ with finite mean and such that $\mu(\{0\})< p_c$, the
critical probability for bond percolation in $\mathbb{Z}^2$, and
$\mathcal{M}$ denotes the family of such measures. Let $\mathbb{E}$
denote the set of nearest-neighbor edges of the lattice
$\mathbb{Z}^{2}$, and let $\{\tau_{e}\dvtx e\in \mathbb{E}\}$ be
a\vspace*{1pt} family of i.i.d. random variables with marginal $\mu$
and joint distribution $\mathbb{P}=\mu^{\mathbb{E}}$. The
\textit{passage time }of a path $\gamma=(e_{1},\ldots,e_{n})\in
\mathbb{E}^{n}$ in the graph $(\mathbb{Z}^{2},\mathbb{E})$ is
$\tau(\gamma)=\sum _{i=1}^{n}\tau_{e_{i}}$, and for
$x,y\in\mathbb{Z}^{2}$ the \textit{passage time} from $x$ to $y$ is
\[
\tau(x,y)=\min_{\gamma}\tau(\gamma),
\]
where the minimum is over all paths $\gamma$ joining $x$ to $y$. A
minimizing path is called a \textit{geodesic} from $x$ to $y$.

The theory of first passage percolation (FPP)
is concerned with the large-scale geometry of the metric space
$(\mathbb
{Z}^{2},\tau)$. The following fundamental result concerns the
asymptotic geometry of balls. Write $B(t)=\{x\in\mathbb{Z}^{2}\dvtx
\tau
(0,x)\leq t\}$ for the ball of radius $t$ at the origin, and for
$S\subseteq\mathbb{R}^{2}$ and $a\geq0$, write $aS=\{ax\dvtx x\in S\}$.
%
\begin{theorem}[(Cox and Durrett~\cite{CoxDurrett})]
\label{thmshapethm}
For every $\mu \in\mathcal{M}$ there exists a deterministic,
compact,\setcounter{footnote}{2}\footnote{This is the only place where we use the assumptions
on $\mu$. If $\mu(\{0\})$ exceeds the critical percolation probability,
then $B_\mu=\mathbb{R}^2$ in an appropriate sense, and without finite
mean we could have $B_\mu =\{ 0\}$.} convex set $B_{\mu}$, with
nonempty interior, such that for every $\varepsilon>0$,
\[
\mathbb{P}\biggl((1-\varepsilon)B_{\mu}\subseteq\frac{1}{t}B(t)\subseteq
(1+\varepsilon)B_{\mu}\mbox{ for all large }t\biggr)=1.
\]
\end{theorem}

Little is known about the geometry of $B_{\mu}$, which is called the
\textit{limit shape}. It is conjectured to be strictly convex when $\mu$
is nonatomic, and nonpolygonal in all but the most degenerate cases,
but, in fact, there are currently no known examples of $\mu$ for which
these properties are verified (see~\cite{HM}). For a compact, convex
set $C\subseteq\mathbb{R}^2$ write
$\ext(C)$ for the set of extreme points and $\sides(C) = |{\ext(C)}|$,
so that $C$ is a polygon if and only
if $\sides(C)<\infty$. The best result to date, due to Marchand \cite
{Marchand},
is that under mild assumptions, $\sides(B_{\mu})\geq8$. Building on
results of Marchand, our purpose
of this note is to give the first examples of distributions for which
the limit shape is not a polygon. If $A$ and $B$ are subsets of
${\mathbb R}^2$ (with, say, the $\ell^1$-metric), we say that $A$ is
$\varepsilon$-\textit{dense} in $B$ if for each $x \in B$ there exists $y
\in A$ such that $\| x-y\|_1 < \varepsilon$.

\begin{theorem}
\label{thminfinitesides} For every $\varepsilon>0$ there exists $\mu
\in
\mathcal{M}$ $($with
atoms$)$ such that $B_{\mu}$ is not a polygon, that is, $\sides
(B_{\mu
})=\infty$, and $\ext(B_\mu)$ is $\varepsilon$-dense in $\partial
B_\mu
$. There exist nonatomic $\mu$ such that $\sides(B_\mu
)>1/\varepsilon$
and $\ext(B_\mu)$ is $\varepsilon$-dense in $\partial B_\mu$.
\end{theorem}

It is tempting to try to obtain a strictly convex limit shape by taking
a limit of measures $\mu_n$ such that $B_{\mu_n}$ have progressively
denser sets of extreme points, but unfortunately the limit one gets in
our example is the unit ball of $\ell^1$.

We also obtain examples of measures $\mu$ such that, at the points
$v\in
\ext(B_\mu)$ which lie on the boundary of the $\ell^1$-unit ball,
$\partial B_\mu$ is infinitely differentiable. This should be compared
with the work of Zhang~\cite{Z}, where such behavior was ruled out for
certain $\mu$. Last, as we explain in Section~\ref{secconstruction},
we can produce measures $\mu$ which are not purely atomic that have
$\sides(B_{\mu}) = \infty$.

Theorem~\ref{thminfinitesides} has implications for the Richardson
growth model, whose definition
we recall next. Fix $x_{1},\ldots,x_{k}\in\mathbb{Z}^{2}$ and imagine
that at time $0$ the site $x_{i}$ is inhabited by a species of type
$i$. Each species spreads at unit speed, taking time $\tau_{e}$
to cross an edge $e\in\mathbb{E}$. An uninhabited site is exclusively
and permanently colonized by the first species that reaches it, that is,
$y\in\mathbb{Z}^{2}$ is occupied at time $t$ by the $i$th species
if $\tau(y,x_{i})\leq t$ and $\tau(y,x_{i})<\tau(y,x_{j})$ for all
$j\neq i$. When there are \textit{unique geodesics}, that is, $\mathbb
{P}$-a.s. no two paths have the same passage time, as is the case when
$\mu$
is continuous, each site is eventually occupied by a unique species. We
shall also want to consider measures $\mu$ with atoms. The definition
of the model in this case is formally the same, but note that there may
be sites which are never colonized, that is, those sites $y$ for which
$\min_{1\leq i\leq k}\tau(y,x_i)$ is achieved by multiple $x_i$'s.

Given initial sites $x_{1},\ldots,x_{k}$, consider the set colonized
by the $i$th species,
\[
C_{i}=\{y\in\mathbb{Z}^{2}\dvtx y\mbox{ is eventually occupied by
}i\}.
\]
One says that $\mu$ \textit{admits coexistence of $k$ species} if for
some choice of
$x_{1},\ldots,x_{k}$,
\[
\mathbb{P}(|C_{i}|=\infty\mbox{ for all }i=1,\ldots,k)>0.
\]
Coexistence of infinitely many species is defined similarly. Notice
that if $\mu$ has atoms and a site $x$ is colonized by species $i$,
then the same will be true if we change the model by introducing an
arbitrary tie-breaking rule governing the infections of sites which are
reached simultaneously by more than one species. Thus, if coexistence
of $k$ species holds in our model, then the same is true under any
tie-breaking rule.

In the past ten years, there have been many studies related to
Richardson-type models, for instance, in questions related to the
asymptotic shape of infected regions~\cite{Gouere,Pimentel} and to
coexistence~\cite{HP,GM,Hoffman1,Hoffman}. Pertaining to the latter,
it is not known, even in simple examples, how many species can coexist.
When $\mu$ is the exponential distribution, H\"{a}ggstr\"{o}m and
Pemantle~\cite{HP} proved coexistence of 2 species (see~\cite{DH} for a
review of recent results on Richardson models, focused on exponential
passage times). Shortly thereafter, Garet and Marchand~\cite{GM} and
Hoffman~\cite{Hoffman1} independently extended these results to prove
coexistence of 2 species for a broad class of translation-invariant
measures on $(0,\infty)^\mathbb{E}$, including some non-i.i.d. ones.
Later, Hoffman~\cite{Hoffman} demonstrated coexistence of $8$ species
for a similarly broad class of measures by establishing a relation with
the number of
sides of the limit shape in the associated FPP. Using the same relation
we obtain the following theorem.
%
\begin{theorem}
\label{thmcoexistence}There exists $\mu\in\mathcal{M}$ $($with atoms$)$
which admits coexistence of infinitely many species. For each $k$ there
exist nonatomic $\mu\in\mathcal{M}$
admitting coexistence of $k$ species.
\end{theorem}

When $\mu$ is nonatomic Theorem~\ref{thmcoexistence} follows from
Theorem~\ref{thminfinitesides} and from Hoffman~\cite{Hoffman},
Theorem 1.4. For the atomic case we provide the necessary modifications
of Hoffman's arguments in Section~\ref{sechoffman-with-atoms}.

Finally, the \textit{graph of infection} $\Gamma(0)\subseteq\mathbb{E}$ is
the union over
$x\in\mathbb{Z}^d$ of the edges of geodesics from $0$ to $x$. This
terminology is consistent with the Richardson model when there are
unique passage times, in which case it is a tree, but note that in
general the graph of infection may also contain sites which were not
infected, that is, those where a tie condition exists, and in this way
we may obtain loops.
A graph has $m$ \textit{ends} if, after removing a finite set of vertices,
the induced graph contains at least $m$ infinite connected components,
and, if there are $m$ ends for every $m\in\mathbb{N}$, we say there are
infinitely many ends. Letting $K(\Gamma(0))$ be the number of ends in
$\Gamma(0)$, Newman~\cite{Newman} has conjectured for a broad class of
$\mu$ that $K(\Gamma(0))=\infty$. Hoffman~\cite{Hoffman} showed for
continuous distributions that in general $K(\Gamma(0)) \geq4$ almost surely.
%
\begin{theorem}
\label{thmends}There exist $\mu\in\mathcal{M}$ $($with atoms$)$ such
that $\mathbb{P}$-a.s.,\break $K(\Gamma(0)) = \infty$. For
each $k$ there exist nonatomic $\mu\in\mathcal{M}$ such that
$\mathbb
{P}$-a.s., $K(\Gamma(0)) \geq k$.
\end{theorem}

When there are unique geodesics, Hoffman's results imply that $K(\Gamma
(0))$ is at least $\sides(B_\mu)/2$ (\cite{Hoffman}, Theorem 1.4),
which proves Theorem~\ref{thmends} for nonatomic~$\mu$. In the case
that $\mu$ has atoms, Theorem~\ref{thmends} follows directly from
Theorem~\ref{thminfinitesides} (using the fact that the measure can be
made to be not purely atomic) and the following result, which we prove
in Section~\ref{secends}.
%
\begin{theorem} \label{thmends-atomic-case}
If $\mu$ is not purely atomic and $B_\mu$ has at least $s\in\mathbb{N}$
sides, then $K(\Gamma(0))$ is ${\mathbb P}$-a.s. at least
%
\begin{equation}\label{eqkdef}
k = 4\biggl\lfloor\frac{s-4}{12} \biggr\rfloor.
\end{equation}
\end{theorem}

See below Theorem~\ref{thmHoffman1} in Section~\ref{secends} for an
explanation of this bound.

\section{Background on the limit shape}

For any $x \in{\mathbb Z}^2$ let $m_\mu(x) =\break \lim_{n \to\infty}
\tau
(0, nx)/n$. This limit exists by Theorem~\ref{thmshapethm} and by a
theorem of Cox and Kesten, the map $\mu\mapsto m_\mu((1,0))$ is
continuous~\cite{CoxKesten}. By~\cite{Kesten}, Remark 6.18, this
continuity can be extended to other unit vectors $x$ and is actually
uniform over all of them. To describe this, endow $\mathcal{M}$ with
the topology
of weak convergence and for convenience fix a compatible
metric $d(\cdot,\cdot)$ on $\mathcal{M}$. Next, fix the $\ell^1$-metric
on $\mathbb{R}^2$, and write $A^{(\varepsilon)}$ for the $\varepsilon
$-neighborhood of $A\subseteq\mathbb{R}$. Let $\mathcal{C}$ denote
the space
of nonempty, closed, convex subsets of $\mathbb{R}^{2}$ endowed
with the Hausdorff metric $d_{H}$,
\[
d_{H}(A,B)=\inf\bigl\{\varepsilon\dvtx A\subseteq B^{(\varepsilon)}\mbox{
and }B\subseteq A^{(\varepsilon)}\bigr\}.
\]

\begin{theorem}[(Kesten)]\label{thmKesten} The map $\mu\mapsto B_{\mu}$
from $\mathcal{M}$ to $\mathcal{C}$ is continuous.
\end{theorem}

We shall use the following elementary semicontinuity property of the
map $A\mapsto\ext(A)$ for $A\in\mathcal{C}$.
%
\begin{lem}
Let $A\in\mathcal{C}$ and $x\in\ext A$. For every $\varepsilon>0$
there is a $\delta>0$ such that, if $A'\in\mathcal{C}$ and
$d_{H}(A,A')<\delta$,
then there exists $x'\in\ext A'$ with $\Vert x-x'\Vert
_{1}<\varepsilon$.\vadjust{\goodbreak}
\end{lem}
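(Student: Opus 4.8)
The plan is to manufacture a linear functional whose \emph{exposed face} on $A$ is a small set clustered near $x$, to note that this face depends semicontinuously on $A$ in the Hausdorff metric, and to take for $x'$ an extreme point of the corresponding face of $A'$. Throughout we may assume $A$ is compact — this is the only case we shall need, since $B_\mu$ is compact by Theorem~\ref{thm:shapethm} — and then any $A'\in\mathcal{C}$ with $d_H(A,A')$ small is compact as well, being closed and contained in a bounded neighborhood of $A$.

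Fix $\varepsilon>0$, pick $r\in(0,\varepsilon/2)$, and let $B=\{z:\|z-x\|_1<r\}$. If $A\subseteq B$ the conclusion is immediate: for $\delta<\varepsilon/2-r$, every $A'$ with $d_H(A,A')<\delta$ lies in the open $\varepsilon$-ball about $x$, and any extreme point of the nonempty compact convex set $A'$ works. So assume $A\setminus B\neq\emptyset$ and set $K=\mathrm{conv}\,\overline{A\setminus B}$, a compact convex subset of $A$ every point of which lies at $\ell^1$-distance at least $r$ from $x$; in particular $x\notin\overline{A\setminus B}$. The crucial step is that $x\notin K$. Indeed, if $x\in K$ then Carath\'eodory's theorem writes $x$ as a convex combination, with positive weights, of at most three points of $\overline{A\setminus B}\subseteq A$, all distinct from $x$ — which is impossible for an extreme point of $A$. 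This assertion is the only place extremality is used, and it is essentially a quantitative form of Straszewicz's density theorem for exposed points.

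Since $K$ is closed and $x\notin K$, we may strictly separate with positive margin: there are $\theta\in\mathbb{R}^2$ with $\|\theta\|_\infty=1$ and $\eta>0$ such that, with $T:=\langle x,\theta\rangle$, one has $\langle z,\theta\rangle\leq T-\eta$ for all $z\in K$. As $A\setminus B\subseteq K$, this means every $z\in A$ with $\langle z,\theta\rangle>T-\eta$ lies in $B$; applied to the maximizers of $\langle\cdot,\theta\rangle$ on $A$ (which attain a value $\geq T$), it shows $F_\theta(A):=\{z\in A:\langle z,\theta\rangle=\max_{a\in A}\langle a,\theta\rangle\}$ is contained in $B$. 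Now set $\delta:=\min(r,\eta/3)$ and take any $A'\in\mathcal{C}$ with $d_H(A,A')<\delta$. Since the support function $C\mapsto\max_{c\in C}\langle c,\theta\rangle$ is $1$-Lipschitz in $d_H$ (because $\|\theta\|_\infty=1$), we get $\max_{a'\in A'}\langle a',\theta\rangle>T-\delta$; hence for any $z'$ in the face $F_\theta(A'):=\{a'\in A':\langle a',\theta\rangle=\max_{a''\in A'}\langle a'',\theta\rangle\}$ there is $a\in A$ with $\|z'-a\|_1<\delta$, so $\langle a,\theta\rangle>T-2\delta>T-\eta$, forcing $a\in B$ and $\|z'-x\|_1<\delta+r<\varepsilon$. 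Thus $F_\theta(A')\subseteq\{z:\|z-x\|_1<\varepsilon\}$; being nonempty, compact and convex, it has an extreme point $x'$, and since $F_\theta(A')$ is a face of $A'$, $x'\in\ext A'$, with $\|x'-x\|_1<\varepsilon$ as required.

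The one substantive point is the separation $x\notin\mathrm{conv}\,\overline{A\setminus B}$: this converts the possibly non-exposed extreme point $x$ into a genuinely exposed \emph{and} small face $F_\theta(A)$, after which the argument is routine — continuity of support functions under $d_H$, plus the elementary facts that a nonempty compact convex subset of $\mathbb{R}^2$ has an extreme point and that extreme points of a face are extreme in the ambient set. (For a general, possibly unbounded $A\in\mathcal{C}$ one reduces to the compact case by intersecting with a large ball, but the compact case is all the paper requires.)
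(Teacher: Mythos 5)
Your proof is correct and follows essentially the same route as the paper's: cut off a small cap of $A$ near $x$ with a linear functional, transfer it to $A'$ via continuity of the support function (equivalently, the paper's observation that the corresponding cap of $A'$ is nonempty and Hausdorff-close), and take an extreme point of the resulting face of $A'$. You additionally supply, via Carath\'eodory and separation, the justification for the existence of the exposing functional --- the step the paper simply asserts with ``choose a linear functional $T$ and $\beta$'' --- and the only blemish is the clause describing $K=\mathrm{conv}\,\overline{A\setminus B}$ as having every point at $\ell^1$-distance at least $r$ from $x$: that holds for $\overline{A\setminus B}$ but not for its convex hull (and, were it true of $K$, your Carath\'eodory argument would be superfluous); nothing later relies on it.
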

\begin{pf}
Choose a linear functional $T\dvtx\mathbb{R}^{2}\rightarrow\mathbb{R}$
and $\beta>0$ such that $T(x)>\beta$ and the set
$B=\{y\in A\dvtx T(y)\geq\beta\}$ has diameter less than $\varepsilon
/2$. Note that $x\in B$. Since $T$ is continuous, for small enough
$\delta$, if $d_{H}(A,A')<\delta$
then the set $B'=\{y\in A'\dvtx T(y)\geq\beta\}$ is nonempty and
satisfies $d_{H}(B,B')<\varepsilon/2$.
Since $T$ is linear, its maximum on the convex set $A'$ is attained
at some extreme point $x'\in\ext A'$, and by definition $x'\in B'$. Now
if $y$ is the closest point in $B$ to $x'$ then $\Vert x' - y \Vert_1
<\varepsilon/2$ and we also have $\Vert x - y\Vert_1 <\varepsilon/2$,
so $\Vert x-x'\Vert_{1}<\varepsilon$.\vspace*{-2.5pt}
\end{pf}

Combining this lemma with Theorem~\ref{thmKesten}, we have the
following corollary.\vspace*{-2.5pt}

\begin{cor}
\label{corcontinuity-of-ext-points}Let $\mu\in\mathcal{M}$. For
every $x_{1},\ldots,x_{k}\in\ext(B_{\mu})$ and $\varepsilon>0$ there
is a $\delta>0$ such that, if $\nu\in\mathcal{M}$ and $d(\nu,\mu
)<\delta$
then there are $y_{1},\ldots,y_{k}\in\ext(B_{\nu})$ such that
$\Vert
x_{i}-y_{i}\Vert<\varepsilon$
for $i=1,\ldots,k$.
\end{cor}

Next we recall some results about limit shapes for a special class of
measures. Given $0<p<1$, let $\mathcal{M}_{p}\subseteq\mathcal{M}$
denote the
set of measures $\mu\in\mathcal{M}$ with an atom of mass $p$ located
at $x=1$, that is, $\mu(\{1\})=p$, and no mass to the left of $1$,
that is, $\mu((-\infty,1))=0$. Limit shapes for $\mu$ of this form
were first studied by Durrett and Liggett~\cite{DurrettLiggett}.
Writing $\vec{p}_{c}$ for the critical parameter of oriented percolation
on $\mathbb{Z}^{2}$ (see Durrett~\cite{Durrett} for background),
it was shown that when $p>\vec{p}_{c}$ and $\mu\in\mathcal{M}_{p}$,
the limit shape $B_{\mu}$ contains a ``flat edge,'' or more
precisely, $\partial B_{\mu}$ has sides which lie on the boundary
of the $\ell^{1}$-unit ball. The nature of this edge was fully
characterized by Marchand
in~\cite{Marchand}. For $p\geq\vec{p}_{c}$, let $\alpha_{p}$ be
the asymptotic speed of super-critical oriented percolation on $\mathbb{Z}^{2}$
with parameter $p$ (see~\cite{Durrett}). Define points
$w_{p},w'_{p}\in
\mathbb{R}^{2}$
by
\begin{eqnarray*}
w_{p} & = & \bigl(1/2+\alpha_{p}/\sqrt{2}, 1/2-\alpha_{p}/\sqrt{2}\bigr),\\
w'_{p} & = & \bigl(1/2-\alpha_{p}/\sqrt{2}, 1/2+\alpha_{p}/\sqrt{2}\bigr).
\end{eqnarray*}
Let $[w_{p},w'_{p}]\subseteq\mathbb{R}^2$ denote the line segment with
endpoints $w_{p}$ and $w'_{p}$. It will be important to note
that $\alpha_{p}$ is strictly increasing in $p>\vec{p}_{c}$, so the
same is true of $[w_{p},w'_{p}]$.\vspace*{-2.5pt}
%
\begin{theorem}[(Marchand~\cite{Marchand})] Let $\mu\in\mathcal{M}_{p}$.
Then:
\begin{longlist}[(4)]
\item[(1)]$B_{\mu}\subseteq\{x\in\mathbb{R}^{2}\dvtx\Vert x\Vert
_{1}\leq
1\}$.
\item[(2)] If $p<\vec{p}_{c}$, then $B_{\mu}\subseteq\{x\in\mathbb
{R}^{2}\dvtx\Vert x\Vert_{1}<1\}$.
\item[(3)] If $p>\vec{p}_{c}$, then $\partial B_{\mu}\cap\{(x,y)\in
\mathbb
{R}^2\dvtx x+y=1\}=[w_{p},w'_{p}]$.
\item[(4)] If $p=\vec{p}_{c}$, then $\partial B_{\mu}\cap\{(x,y)\in
\mathbb
{R}^2\dvtx x+y=1\}=\{(1/2,1/2)\}$.\vspace*{-2.5pt}
\end{longlist}
\end{theorem}

As noted by Marchand, this implies $\sides(B_{\mu})\geq8$ for $\mu
\in
\mathcal{M}_{p}$
and $\vec{p}_{c}<p<1$, since $w_{p},w'_{p}$ and their reflections
about the axes are extreme points.\vspace*{-2.5pt}

\section{\texorpdfstring{Proof of Theorem \protect\ref{thminfinitesides}}{Proof of Theorem 1.2}}
\label{secconstruction}

Our aim is to construct a $\mu\in\mathcal{M}$ with\break
$\sides(B_{\mu})=\infty$. Fix any $p_{0}>\vec{p}_{c}$, $\mu_{0}\in
\mathcal{M}_{p_{0}}$
and a real parameter $\eta_{0}>0$. We will\vadjust{\goodbreak} inductively define a sequence
$p_{1}>p_{2}>\cdots>\vec{p}_{c}$, measures $\mu_{1}\in\mathcal{M}_{p_{1}}$,
$\mu_{2}\in\mathcal{M}_{p_{2}},\ldots,$ and $\eta_{1},\eta
_{2},\ldots>0$
such that for every $n\geq0$ and all $k\leq n$,
\begin{longlist}[(2)]
\item[(1)] If $\nu\in\mathcal{M}$ and $d(\nu,\mu_{k})<\eta_{k}$ then
$\sides
(B_{\nu})\geq k$, and
\item[(2)] $d(\mu_{k},\mu_{n})<\frac{1}{2}\eta_{k}$.
\end{longlist}
Note that, in particular, $\sides(B_{\mu_{k}})\geq k$ by (1). Assuming
$p_{k},\mu_{k}$ and $\eta_{k}$ are defined for $k\leq n$,
we define them for $n+1$. Fix $p_{n+1}\in(\vec{p}_{c},p_{n})$ and
set \mbox{$r=p_{n}-p_{n+1}>0$}. For $y>1$ construct $\mu_{n+1}^{y}$
from $\mu_{n}$ by moving an amount $r$ of mass from the atom at
$1$ to $y$, that is,
\[
\mu_{n+1}^{y}=\mu_{n}-r\delta_{1}+r\delta_{y}.
\]
We claim that for small enough $y>1$ and any sufficiently small choice
of $\eta_{n+1}>0$ (depending on the previous parameters), the measure
$\mu_{n+1}=\mu_{n+1}^{y}$ has the
desired properties. First, $\mu_{n+1}^{y}\rightarrow\mu_{n}$ weakly
as $y\downarrow1$, so, since $\mu_{n}$ satisfies (2), so does $\mu_{n+1}^{y}$
for all sufficiently small $y$.\vspace*{1pt}

Second,\vspace*{1pt} we claim that $\sides(B_{\mu_{n+1}^y})\geq n+1$ for $y$ close
enough to $1$. Indeed, since $r>0$ we have $w_{p_{n+1}}\neq w_{p_{n}}$.
Using (1), choose $n$ extreme points $x_{1},\ldots,x_{n} \in\ext
(B_{\mu_{n}})$
and let
\[
a=\min\{ \Vert x_{i}-x_{j}\Vert_{1},\Vert x_{i}-w_{p_{n+1}}\Vert
_{1}\dvtx i\neq j\}.
\]
Note that $a>0$ by Marchand's theorem. By Corollary \ref
{corcontinuity-of-ext-points}, for $y$ close enough to~$1$, for each
$i=1,\ldots,n$ we can choose an extreme point $x'_{i}$ of
$B_{\mu_{n+1}^{y}}$ with $\Vert x'_{i}-x_{i}\Vert<a/2$. By Marchand's
theorem, $B_{\mu_{n+1}^{y}}$ also\vspace*{1pt} has an extreme point at
$w_{p_{n+1}}$. By definition of $a$, these extreme points are distinct,
giving $\sides(B_{\mu_{n+1}^{y}})\geq n+1$.

Finally, by Corollary~\ref{corcontinuity-of-ext-points}, $\mu_{n+1}^{y}$
satisfies (1) for any sufficiently small choice of $\eta_{n+1}$.

Let $\mu$ be a weak limit of $\mu_{n}$. Then $d(\mu,\mu_{n})\leq
\frac
{1}{2}\eta_{n}$
for all $n$, so by (2), $\sides(B_{\mu})=\infty$. The proof is complete.

One can modify the construction in a number of ways in order to control
the resulting measure $\mu$. First, at each step, rather than creating
a new atom at $y$, one can instead add, for example, Lebesgue measure
on a small interval around $y$. In this way one can make the atom at
$1$ be the only atom of $\mu$.

Regarding the degree of denseness of the extreme points in the
boundary, note that at each stage, if $y$ is small enough and $p_{n+1}$
is close enough to $p_n$, the new extreme point we introduce can be
made arbitrarily close to $w_{p_n}$ (here we use that $\alpha_p$ is
continuous in $p>\vec p_c$, from~\cite{Durrett}), and in the limit we
can ensure an extreme point close to it. Thus, if we begin from $\mu
_0=\delta_1$ and choose $p_n$ so that $\lim p_n = \vec{p}_c$, and using
Marchand's result that the flat edge in $B_{\mu_n}$ then shrinks to a
point (and symmetry of the limit shape about the axes), we can ensure
$\varepsilon$-density of the extreme points of $B_\mu$.

For the second part of Theorem~\ref{thminfinitesides},
choose a sequence $\nu_{n}\in\mathcal{M}$ of continuous measures
converging weakly to $\mu$. By Corollary \ref
{corcontinuity-of-ext-points}, $\sides(B_{\nu_{n}})\rightarrow
\infty$
and if $\ext(B_\mu)$ is $\varepsilon$-dense in $\partial B_\mu$, then
the same holds for $B_{\nu_n}$ for sufficiently large $n$.

Regarding the remark after Theorem~\ref{thminfinitesides}, one may
verify that if at each stage $y$ is chosen close enough to $1$ and
$p=\lim p_n$, then $w_p$ is a $C^\infty$-point of $\partial B_\mu$.

\section{\texorpdfstring{Proof of Theorem \protect\ref{thmcoexistence}}{Proof of Theorem 1.3}}
\label{sechoffman-with-atoms}

Let us recall Hoffman's argument relating coexistence to the geometry
of the limit shape for continuous $\mu$ (\cite{Hoffman}, Theorem 1.6).
Extend $\tau$ to $\mathbb{R}^{2}\times\mathbb{R}^{2}$
by $\tau(x,y)=\tau(x',y')$ where $x'$ is the unique lattice point in
$x+[-1/2,1/2)^2$. Similarly, a geodesic between $x,y$ is a geodesic
between $x',y'$.
For $S\subseteq\mathbb{R}^{2}$, the Busemann function $B_{S}\dvtx\mathbb
{R}^{2}\times\mathbb{R}^{2}\rightarrow\mathbb{R}$
is defined by
\[
B_{S}(x,y)=\inf_{z\in S}\tau(x,z)-\inf_{w\in S}\tau(y,w).
\]
For $v\in\mathbb{R}^{2}$, write $S+v=\{s+v\dvtx s\in S\}$. If $v\in
\partial B_\mu$ is a point of differentiability and $w$ is a tangent
vector at $v$, let
$\pi_{v}$ denote the linear functional $av+bw\mapsto a$. Define
the lower density of a set $A\subseteq\mathbb{N}$ by $\underline{d}(A)
= \liminf\frac{1}{N}|A\cap\{1,\ldots,N\}|$. The following is a
rephrasing of~\cite{Hoffman}, Lemma 4.6.

\begin{theorem}[(Hoffman)] \label{thmbusemann} Let $\mu\in\mathcal{M}$
and let $v\in B_{\mu}$ be a point of differentiability of $\partial
B_{\mu}$
with tangent line $L\subseteq\mathbb{R}^{2}$. Then for every
$\varepsilon>0$
there exists an $M=M(v,\varepsilon)>0$ such that, if $x,y\in\mathbb{R}^{2}$
satisfy $\pi_{v}(x-y)>M$, then the set of $n$ such that
\[
\mathbb{P}\bigl(B_{L+nv}(y,x)>(1-\varepsilon)\pi_{v}(x-y)\bigr)>1-\varepsilon
\]
has lower density at least $1-\varepsilon$.
\end{theorem}

Hoffman's proof of this result does not use unique passage times.

Theorem~\ref{thmbusemann} is related to coexistence as follows. Suppose
$\sides(B_{\mu})\geq k$. We can then find $k$ points of differentiability
$v_{1},\ldots,v_{k}\in\partial B_{\mu}$ with distinct
tangent lines $L_{i}$, and in particular $\pi_{v_{i}}(v_{i}-v_{j})>0$
for all $j\neq i$. Fix $\varepsilon>0$ and choose $R>0$
large enough so that the points $x_{i}=Rv_{i}$ satisfy $\pi
_{v_{i}}(x_{i}-x_{j})>M(v_{i},\varepsilon/k^2)$.
Using the elementary relation $\underline{d}(\bigcap_{i=1}^{n}A_{i})\geq
1-\sum_{i=1}^{n}(1-\underline{d}(A_{i}))$,
for each $i$ we see that the set of $n$ such that
\[
\mathbb{P}\bigl(B_{L+nv_i}(x_{j},x_{i})>0\mbox{ for all }j\neq i\bigr)>1-\frac
{\varepsilon}{k}
\]
has lower density at least $1-\varepsilon/k$. Hence, with positive
probability (which can be made arbitrarily
close to $1$ by decreasing $\varepsilon$), for each $i$ there are
infinitely many $n$ such that $B_{L_{i}+nv_{i}}(x_{j},x_{i})>0$
for all $j\neq i$. For such an $n$, take $y_{i,n}\in L_{i}+nv_{i}$
to be the closest point (in the sense of passage times) to $x_{i}$;
by definition $y_{i,n}$ is reached first by
species $i$. The points $y_{i,n}$ are in $C_i$, so $|C_{i}|=\infty$ for
$i=1,\ldots,k$,
that is, coexistence occurs. Note that this argument does not use
unique passage times.


When $\sides(B_{\mu})=\infty$, one proves coexistence of infinitely
many types using the above arguments.
Choose a sequence $\{v_{i}\}_{i=1}^{\infty}\subseteq\partial B_{\mu}$
of points of differentiability of the boundary, ordered clockwise, say.
Given $\varepsilon>0$, define the points $x_{i}$ inductively by
$x_{i+1}=x_{i}+R_{i}(v_{i+1}-v_{i})$
for a sufficiently large $R_{i}>0$ so as to ensure that for $i\neq j$,
$\pi_{v_{i}}(x_{i}-x_{j})>M(v_{i},\varepsilon_{i,j})$, where
$\sum_{i,j}\varepsilon_{i,j}<\varepsilon$. Using the argument above, we
see that with probability $>1-\varepsilon$, any finite subcollection of
the $x_i$'s coexist. We now need to show that this implies that all
species coexist with probability $>1-\varepsilon$. Fix a configuration
in which every finite set of species coexist, and suppose that they do
not all coexist. Let $C_i$ denote the set of sites colonized by the
$i$th species when all species compete simultaneously. Since we are
assuming that coexistence does not occur, we have $|C_{i_0}|<\infty$
for some $i_0$. Let $\partial C_{i_0}$ denote the set of sites in
$\mathbb{Z}^2\setminus C_{i_0}$ which are adjacent to $C_{i_0}$, so
that this is a finite set. Every $u\in\partial C_{i_0}$ is colonized
by some species $j=j(u)$ at or before the time the species $i_0$
reaches $u$. It follows that when the species $i_0$ competes against
the finite set of species $\{j(u)\dvtx u\in\partial C_{i_0}\}$, it
still only colonizes the set $C_{i_0}$. This contradiction completes
the proof.

\section{\texorpdfstring{Proof of Theorem \protect\ref{thmends-atomic-case}}{Proof of Theorem 1.5}}\label
{secends}

Recall that $\Gamma(0)$ denotes the infection graph and that $K(\Gamma
(0))$ is the number of ends of $\Gamma(0)$. In the following discussion
we fix a measure $\mu\in\mathcal{M}$ and assume that $\mu$ is not
purely atomic. Consequently, there exists a Borel set $Q \subset
(0,\infty)$ such that $\mu(Q) > 0$ and $\mu(\{q\})=0$ for all $q \in Q$.
Note that $B_\mu$ has nonempty interior, is symmetric with respect to
reflections through the axes and is convex. Therefore, the origin is in
its interior.
%
\begin{prop}\label{propkends}
Suppose that ${\mathbb P}$-a.s. there exist $k$ infinite geodesics
$\gamma_1,\ldots,\gamma_k$ starting at $0$, edges $e_1,\ldots,e_k$ and
a finite set $V\subseteq\mathbb{Z}^2$, such that: \textup{(a)}~$e_i$ lies on
$\gamma_i$ but not on $\gamma_j$ for $i\neq j$, \textup{(b)} the endpoints
of $e_i$ are in $V$, \textup{(c)} $\tau_{e_i} \in Q$ and \textup{(d)} each pair
of geodesics is disjoint outside of $V$. Then ${\mathbb P}$-a.s.,
$K(\Gamma(0))\geq k$.
\end{prop}
\begin{pf}
Under our assumptions on $\mu$, with probability $1$ every pair of
edges with passage times in $Q$ has distinct passage times.
Consequently, geodesics between $x,y\in\mathbb{Z}^2$ can differ only in
edges $e$ with $\tau_e \notin Q$, and must share edges with $\tau_e
\in
Q$. We assume we are in this probability-$1$ event.

We claim that no two of the given geodesics are connected in $\Gamma(0)
\setminus V$. Suppose, for instance, that $\gamma_1,\gamma_2$ were
connected in $\Gamma(0) \setminus V$ by a path $\sigma$ which we may
assume is simple (not self-intersecting) and with endpoints $y_1 \in
\gamma_{1}$ and $y_2 \in\gamma_{2}$. Denote the sequence of vertices
in $\sigma$ by $y_1=v_{1},v_{2},\ldots,v_{m}=y_2$. Write $e=e_1$ and
let $J\subseteq\{1,\ldots,m\}$ denote the set of $j$ such that there
exists a geodesic $\sigma_{j}$ from $0$ to $v_{j}$ which contains\vadjust{\goodbreak} $e$.
We claim that $m \in J$. This leads to a contradiction because then
$\sigma_1$ and $\gamma_2$ are both geodesics connecting $0$ and $y_2$,
but only one of them, $\sigma_m$, contains $e$.

Clearly $1 \in J$. Suppose now that $j\in J$ with corresponding
geodesic $\sigma_{j}$. Write $f$ for the edge between $v_j$ and
$v_{j+1}$, and note that $f\neq e$ because the endpoints of $e$ are in
$V$ while those of $f$ are not. If $\tau(f)=0$, then we can adjoin $f$
to $\sigma_j$ to form $\sigma_{j+1}$. Suppose, therefore, that $\tau
(f)>0$, so that $\tau(0,v_j) \neq\tau(0,v_{j+1})$. If $\tau
(0,v_{j+1})>\tau(0,v_{j})$ then we adjoin $f$ to $\sigma_{j}$ and
obtain a geodesic $\sigma_{j+1}$ with
the desired properties. If $\tau(0,v_{j+1})<\tau(0,v_{j})$ and
$v_{j+1}$ lies on
$\sigma_{j}$, we remove $f$ from $\sigma_{j}$
to obtain $\sigma_{j+1}$. On the other hand, if $v_{j+1}$ does not
lie on $\sigma_{j}$, but $\tau(0,v_{j+1}) <\tau(0,v_j)$, then there is
a geodesic $\sigma'_{j+1}$ from $0$
to $v_{j}$ whose last edge is $f$. Because $\sigma'_{j+1}$ must reach
$v_{j}$ in the
same time as $\sigma_{j}$ does, it must pass through each of those
edges of $\sigma_{j}$
which have passage times in $Q$, and in particular through~$e$. We
remove $f$ from $\sigma'_{j+1}$ to obtain $\sigma_{j+1}$.
\end{pf}

Our goal is to establish the hypotheses of the proposition for $k$ as
in (\ref{eqkdef}). It is enough to show that there exist random
variables $m<M$ such that with probability one:
\begin{longlist}[(2)]
\item[(1)] There exist $k$ infinite geodesics $\gamma_1,\ldots,\gamma_k$
which are disjoint outside of $mB_\mu$.
\item[(2)] There are edges $e_i$ in $\gamma_i$, with endpoints in $MB_\mu
\setminus mB_\mu$, such that $\tau_{e_i} \in Q$.
\end{longlist}
This suffices because we can then set $V=MB_\mu$ in the proposition. To
show that such $m,M$ exist, it is enough to show that for every
$\varepsilon>0$ there exist deterministic integers $m<M$ such that each
of the conditions above holds on an event of probability
$>1-\varepsilon$.

Given $u,v,w\in\partial B_{\mu}$ which are points of differentiability
of $\partial B_{\mu}$, let $C(u,v,w)$ denote the open arc in $\partial
B_{\mu}$ from $u$ to $w$ containing $v$. We rely on the following
result, whose proof does not require unique geodesics. It is a
rephrasing of~\cite{Hoffman}, Lemma 4.7.
%
\begin{theorem}[(Hoffman)] \label{thmHoffman1} Let $u,v,w\in\partial
B_{\mu
}$ be points of differentiability of $\partial B_{\mu}$, let $L$ be the
tangent line at $v$ and write $C=C(u,v,w)$. Then for every $\varepsilon
>0$, there is an $M_{0}=M_{0}(\varepsilon)$ such that for every
$M>M_{0}$, the set of $n$ such that
\[
\mathbb{P}(\gamma\cap M\partial B_{\mu}\subseteq MC\mbox{ for all
geodesics }\gamma\mbox{ from }0\mbox{ to }L+nv) > 1-\varepsilon
\]
has lower density at least $1-\varepsilon$.
\end{theorem}

Henceforth, fix $k$ as in (\ref{eqkdef}) and $\varepsilon>0$ and for
$i=1,\ldots,k$ choose points $u_{i},v_{i},w_{i}\in\partial B_{\mu}$ and
lines $L_{i}$ as in the theorem, and such that the closed sets
$C_{i}=\overline{C(u_{i},v_{i},w_{i})}$ are pairwise\vspace*{1.5pt}
disjoint and do not intersect the boundary of the $\ell^1$ unit ball;
write $C=\bigcup _{i=1}^k C_i$. Note\vadjust{\goodbreak} that $k$ was picked so that such a
choice is possible: $\frac{1}{4}(\sides(B_\mu)-4)$ is the number of
distinct sides on each of the four curves in $\partial B_\mu$ which
constitute the complement of the $\ell^1$ unit ball; dividing this
number by 3 gives an upper bound on the number of triples we can choose
in each of these curves. Taking integer part and multiplying by $4$
gives $k$.
%
\begin{claim}\label{claimdensity}
There exists $M_0$ and $\rho>0$ such that with probability at least
$1-\varepsilon$, for all $M>M_0$, every $x\in MC$ and every geodesic
$\gamma$ from $0$ to $x$, at least $\rho M$ edges of $\gamma$ have
passage times in $Q$.
\end{claim}
\begin{pf}
Define edge weights $\{\tau'_{e}\dvtx e\in\mathbb{E}\}$ by the rule
that if $\tau_{e} \in Q$ then $\tau'_{e}=\tau_{e}+1$ and $\tau'_e =
\tau
_e$ otherwise. Let $\mu'$ denote the marginal distribution of~$\tau'_e$.

Choose $0<\eta<1$ so that $(1-\eta)C \cap B_{\mu'}=\varnothing$ (we can
do so by a theorem
of Marchand~\cite{Marchand}, Theorem 1.5, and the fact that $C$ is
disjoint from the $\ell^1$ unit ball). For a path $\sigma$ let
$N_Q(\sigma)$ be the number of edges of $\sigma$ with passage time in
$Q$. By Theorem~\ref{thmshapethm}, there is an event $A$ with
$\mathbb
{P}(A)>1-\varepsilon$
and an $M_{0}$ such that for all $M>M_{0}$ and $y\in M\partial B_\mu$,
the $\tau$-geodesic $\gamma$ from $0$ to $y$ satisfies $(1-\eta^2)M <
\tau(\gamma) < (1+\eta^2)M$, and similarly for $y'\in M\partial
B_{\mu
'}$ and the $\tau'$-length of $\tau'$-geodesics from $0$ to $y'$. We
claim that $A$ is the desired event. Indeed, let $M>M_{0}$ and let
$\gamma$ be a $\tau$-geodesic from $0$ to some $x\in MC$. We have
\[
\tau'(\gamma) = \tau(\gamma) + N_Q(\gamma) \leq(1+\eta^2)M +
N_Q(\gamma).
\]
On the other hand, $x=\frac{M}{s}y$ for some $y\in\partial B_{\mu'}$
and $s<1-\eta$, so
\[
\tau'(\gamma) \geq(1-\eta^{2})\frac{M}{1-\eta}.
\]
Combining these we find that $N_Q(\gamma) \geq(\eta-\eta^2)M$. We take
this to be $\rho M$.
\end{pf}

Let $\alpha>0$ denote the quantity
%
\begin{equation}\label{eqprojineq}
\alpha= \tfrac{1}{2} \min\{ \pi_{v_{i}}(x_i -x_j) \dvtx x_i\in C_i,
x_j\in C_j, i\neq j\}.
\end{equation}
Choose finite sets $D_i\subseteq C_i \cap\partial B_\mu$ with the
property that for every $i=1,\ldots,k$,
\[
C_i \subseteq\bigcup_{x\in D_i}\biggl(x+\frac{\alpha}{10}B_\mu\biggr).
\]
This property can be satisfied by compactness of $\bigcup_i C_i$ and the
fact that $B_\mu$ contains a neighborhood of the origin. Write
$D=\bigcup_{i=1}^k D_i$.

We can choose large integers $m$ and $M\gg m$ and a set $I\subseteq
\mathbb{N}$ of density $>1-\varepsilon$ such that, for $n\in I$, the
following statements hold with probability~$>1-\varepsilon$.

\renewcommand\thelonglist{(\Alph{longlist})}
\renewcommand\labellonglist{\thelonglist}
\begin{longlist}
\item\label{enbusemann-lower-bound} If $i\neq j$, then
$B_{L_{i}+nv_{i}}(x_{j},x_{i})\geq m\alpha$ for all $x_i\in mD_i$ and
$x_j \in mD_j$.

\item\label{enconcentrated-geos} Every geodesic $\gamma_{i,n}$ from
$0$ to $L_i + nv_i$ intersects $m \partial B_\mu$ in $mC_i$ and
intersects $M \partial B_\mu$ in $MC_i$.

\item\label{enlimitshapeineq1} $|\tau(0,x)-m|<\frac{m\alpha}{10}$ for
all $x \in mD$.

\item\label{enbdry-points} If $x\in mD$ and $y\in x+\frac{m\alpha
}{10}B_\mu$, then $\tau(y,x) < \frac{m\alpha}{5}$.

\item\label{enQ-weights} At least one edge on $\gamma_{i,n}\cap
(MB_\mu\setminus mB_\mu)$ has passage time in $Q$.
\end{longlist}

Indeed, for $m,M$ large enough the first two properties follow from
Theorems~\ref{thmHoffman1} and~\ref{thmbusemann}, and the third and
fourth from Theorem~\ref{thmshapethm} [for~\ref{enbdry-points} we
apply Theorem~\ref{thmshapethm} to each of the finitely many points in
$mD$ and intersect the events; note that the probabilities do not
depend on the point in question, only on $m$]. Last,
\ref{enQ-weights}~follows from the previous claim, since when $M$ is large
the number of edges on any geodesic from $0$ to $m\partial B_\mu$ is
smaller than $\rho M$.

Call $A_n$ the intersection of the above five events. Since $\mathbb
{P}(A_n)>1-\varepsilon$ for all $n\in I$, the event $A$ that $A_n$
occurs for infinitely many $n$ has $\mathbb{P}(A)>1-\varepsilon$. We
now consider only configurations in $A$. For each $n$ and $i$, fix
$\gamma_{i,n}$ as in~\ref{enconcentrated-geos}. We may choose a
(random) infinite set $I' \subseteq I$ such that for all $n\in I'$,
$A_n$ occurs and $J\subseteq I'$ such that $\lim_{n\in J} \gamma
_{i,n}\to\gamma_i$ for some infinite geodesics $\gamma_i$ originating
at $0$, that is, for every $r>0$ we have $\gamma_i \cap[-r,r]^2 =
\gamma_{i,n}\cap[-r,r]^2$ for all large enough $n\in J$. Henceforth,
we only consider such $n$.

Let $y_{i,n}$ be the first intersection point of $\gamma_{i,n}$ with
$mC_i$, and choose $x_{i,n}\in mD_i$ such that $y_{i,n}\in x_{i,n} +
\frac{m\alpha}{10}B_\mu$.
Then by~\ref{enbdry-points} we have $\tau(x_{i,n},y_{i,n})\leq
\frac
{m\alpha}{5}$, so by~\ref{enbusemann-lower-bound},
%
\begin{equation}\label{encloseness-of-busemann-funcs}
|B_{L_{i}+nv_{i}}(y_{j,n},y_{i,n})-B_{L_{i}+nv_{i}}(x_{j,n},x_{i,n})|<\frac
{2m\alpha}{5}\qquad
\mbox{for } i \neq j.
\end{equation}

\begin{claim}The $\gamma_i$'s are disjoint outside of $mB_\mu$.
\end{claim}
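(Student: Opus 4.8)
The plan is to argue by contradiction. Suppose that for some $i\neq j$ the infinite geodesics $\gamma_i$ and $\gamma_j$ share a vertex $z\notin mB_\mu$. Choosing $r$ with $z\in[-r,r]^2$ and using that $\gamma_i\cap[-r,r]^2=\gamma_{i,n}\cap[-r,r]^2$ and $\gamma_j\cap[-r,r]^2=\gamma_{j,n}\cap[-r,r]^2$ for all large $n\in J$, we get that for all large enough $n\in J$ the vertex $z$ lies on both $\gamma_{i,n}$ and $\gamma_{j,n}$ and $A_n$ occurs; fix such an $n$. The idea is to show that having $z$ on the geodesic $\gamma_{i,n}$ forces $B_{L_i+nv_i}(y_{j,n},y_{i,n})<3m\alpha/5$, which contradicts the lower bound $B_{L_i+nv_i}(y_{j,n},y_{i,n})>m\alpha-2m\alpha/5=3m\alpha/5$ coming from \eqref{en:busemann-lower-bound} and \eqref{en:closeness-of-busemann-funcs}.

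To obtain this upper bound, I would first note that since the origin lies in the interior of $mB_\mu$ and, by \eqref{en:concentrated-geos}, every crossing of $m\partial B_\mu$ by $\gamma_{i,n}$ occurs inside $mC_i$, the point $y_{i,n}$ is the first place at which $\gamma_{i,n}$ leaves $mB_\mu$; as $z\notin mB_\mu$, it follows that $y_{i,n}$ precedes $z$ along $\gamma_{i,n}$. Using that sub-paths of geodesics are geodesics — and that a sub-path of a geodesic from $0$ to a line, starting at a vertex of that geodesic, is again a geodesic to the line — this yields $\tau(0,z)=\tau(0,y_{i,n})+\tau(y_{i,n},z)$ and $\tau(y_{i,n},L_i+nv_i)=\tau(y_{i,n},z)+\tau(z,L_i+nv_i)$. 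The same reasoning at index $j$ gives $\tau(0,z)=\tau(0,y_{j,n})+\tau(y_{j,n},z)$, while the triangle inequality gives only $\tau(y_{j,n},L_i+nv_i)\le\tau(y_{j,n},z)+\tau(z,L_i+nv_i)$. Subtracting and cancelling the common term $\tau(z,L_i+nv_i)$,
\[
B_{L_i+nv_i}(y_{j,n},y_{i,n})=\tau(y_{j,n},L_i+nv_i)-\tau(y_{i,n},L_i+nv_i)\le\tau(y_{j,n},z)-\tau(y_{i,n},z)=\tau(0,y_{i,n})-\tau(0,y_{j,n}).
\]

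Finally I would bound the right-hand side. Recalling that $y_{i,n}\in x_{i,n}+\frac{m\alpha}{10}B_\mu$ with $x_{i,n}\in mD_i$, properties \eqref{en:limitshapeineq1} and \eqref{en:bdry-points} give $\tau(0,y_{i,n})\le\tau(0,x_{i,n})+\tau(x_{i,n},y_{i,n})<m+\frac{m\alpha}{10}+\frac{m\alpha}{5}=m+\frac{3m\alpha}{10}$, and symmetrically $\tau(0,y_{j,n})>m-\frac{3m\alpha}{10}$, whence $B_{L_i+nv_i}(y_{j,n},y_{i,n})<\frac{3m\alpha}{5}$ — contradicting the lower bound and finishing the proof. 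I do not expect a serious obstacle here; the one point that needs care is the claim that $y_{i,n}$ and $y_{j,n}$ precede $z$ on their respective geodesics, together with the resulting additivity of passage times along geodesic sub-paths running to the line $L_i+nv_i$, which is precisely where \eqref{en:concentrated-geos} and the location of the origin in the interior of $mB_\mu$ enter. Everything else is bookkeeping with the triangle inequality and the five events bundled into $A_n$.
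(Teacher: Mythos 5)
Your argument is correct and is essentially the paper's own proof: both derive additivity of passage times at the shared point $z$, compare the two routes from $y_{i,n}$ and $y_{j,n}$ through $z$ to the line $L_i+nv_i$ (your triangle inequality is exactly the paper's spliced path $\sigma_2$), and contradict the Busemann lower bound from \eqref{en:busemann-lower-bound} and \eqref{en:closeness-of-busemann-funcs}. If anything, your bookkeeping is slightly more careful than the paper's, since you pass from $y_{i,n}$ to $x_{i,n}$ via \eqref{en:bdry-points} before invoking \eqref{en:limitshapeineq1}.
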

\begin{pf}
Suppose, for example, that $\gamma_1,\gamma_2$ intersect at some point
$z$ outside of $mB_\mu$. Then for large enough $n\in J$ the same is
true of $\gamma_{1,n}$ and $\gamma_{2,n}$. Then
\[
\tau(0,y_{1,n})+\tau(y_{1,n},z)=\tau(0,y_{2,n})+\tau(y_{2,n},z).
\]
By~\ref{enlimitshapeineq1} we have $|\tau(0,y_{1,n})-\tau
(0,y_{2,n})|<\frac{2m\alpha}{10}$, so
\[
|\tau(y_{1,n},z)-\tau(y_{2,n},z)|<\frac{2m\alpha}{10}.
\]
Write $\sigma_{1}$ for the part of $\gamma_{1,n}$ from $y_{1,n}$ to
$L_{1}+nv_{1}$. Let $\sigma_2$ be path which starts at $y_{2,n}$,
follows $\gamma_{2,n}$ until\vspace*{1pt} $z$ and then follows $\gamma
_{1,n}$ until $L_1+nv_1$. We find that
$|\tau(\sigma_{1})-\tau(\sigma_{2})|<\frac{2m\alpha}{10}$. But
$\gamma_{1,n}$ is a shortest path from $0$ to $L_{1}+nv_{1}$, so
$\sigma_{1}$ is a shortest path from $y_{1,n}$ to $L_{1}+nv_{1}$.
Hence, $B_{L_{1}+nv_{1}}(y_{2,n},y_{1,n})\leq\frac{2m\alpha}{10}$.
Combined with (\ref{encloseness-of-busemann-funcs}), this contradicts
\ref {enbusemann-lower-bound}.
\end{pf}

Finally, combining the last claim with~\ref{enQ-weights} establishes
the two claims stated after Proposition~\ref{propkends}. This
completes the proof of Theorem~\ref{thmends}.



\printaddresses

\end{document}